\newlength{\defbaselineskip}
\theoremstyle{plain}
\newtheorem{theorem}{Theorem}
\newtheorem{prop}[theorem]{Proposition}
\theoremstyle{definition}
\newcommand{\dqbin}[3]{\displaystyle\genfrac{[}{]}{0pt}{}{#1}{#2}_{#3}}
\newcommand{\tqbin}[3]{\textstyle\genfrac{[}{]}{0pt}{}{#1}{#2}_{#3}}
\newcommand{\Z}{\mathbb{Z}}
\newcommand{\mcC}{\mathcal{C}}
\newcommand{\mcR}{\mathcal{R}}
\DeclareMathOperator{\coinv}{coinv}
\DeclareMathOperator{\GF}{GF}
\DeclareMathOperator{\inv}{inv}
\DeclareMathOperator{\wt}{wt}
\begin{document}

\title{Bijective proofs of some coinversion identities related to
 Macdonald polynomials}
 
\author{Nicholas A. Loehr}
\thanks{This work was supported by a grant from the Simons
 Foundation/SFARI (\#633564 to Nicholas Loehr).} 

\begin{abstract}
This paper gives bijective proofs of some novel coinversion identities
first discovered by Ayyer, Mandelshtam, and Martin~\cite{amm-mac} as part of
their proof of a new combinatorial formula for the
modified Macdonald polynomials $\tilde{H}_{\mu}$.
Those authors used intricate algebraic manipulations of $q$-binomial 
coefficients to prove these identities, which imply the existence of certain 
bijections needed in their proof that their formula satisfies the axioms 
characterizing $\tilde{H}_{\mu}$. They posed the open problem of constructing 
such bijections explicitly. We resolve that problem here.
\end{abstract}

\maketitle

\section{Introduction}
\label{sec:intro} 

We begin by reviewing the coinversion statistic and its relation
to $q$-binomial coefficients and $q$-multinomial coefficients.
Given a formal variable $q$ and a positive
integer $n$, define the \emph{$q$-integer} $[n]_q=1+q+q^2+\cdots+q^{n-1}$
and the \emph{$q$-factorial} $[n]!_q=\prod_{j=1}^n [j]_q$. We also
set $[0]_q=0$ and $[0]!_q=1$.  For integers $k,n$ with $0\leq k\leq n$,
define the \emph{$q$-binomial coefficient} 
\[ \dqbin{n}{k}{q}=\frac{[n]!_q}{[k]!_q [n-k]!_q}. \]
We also define $\tqbin{n}{k}{q}=0$ when $k<0$ or $k>n$.
For nonnegative integers $k_1,k_2,\ldots,k_s$ with $k_1+k_2+\cdots+k_s=n$,
define the \emph{$q$-multinomial coefficient}
\[ \dqbin{n}{k_1,k_2,\ldots,k_s}{q}=\frac{[n]!_q}{[k_1]!_q [k_2]!_q\cdots
 [k_s]!_q}. \]
If any $k_i$ is negative, the $q$-multinomial coefficient is defined to be $0$.

For a word $w=w_1w_2\cdots w_n$ where each $w_i$ is an integer,
the \emph{coinversion statistic} $\coinv(w)$ is the number of pairs $(i,j)$
with $i<j$ and $w_i<w_j$.  For example, $\coinv(231132)=6$.
Let $\mcR(1^{k_1}2^{k_2}\cdots s^{k_s})$ be the
set of all words $w$ consisting of $k_1$ copies of $1$, $k_2$ copies of $2$,
and so on. The following combinatorial formulas are well-known:
\begin{equation}\label{eq:qbin-coinv}
\dqbin{n}{k_1,k_2,\ldots,k_s}{q}=\sum_{w\in\mcR(1^{k_1}2^{k_2}\cdots s^{k_s})}
   q^{\coinv(w)}; \qquad
   \dqbin{n}{k}{q}=\sum_{w\in\mcR(1^k 2^{n-k})} q^{\coinv(w)}. 
\end{equation}
These formulas are often stated with $\coinv(w)$ replaced by the 
\emph{inversion count} $\inv(w)$, which is the number of $i<j$ with $w_i>w_j$.
But the standard proofs using $\inv(w)$ 
(see, for example,~\cite[Chpt. 8]{loehr-comb})
extend at once to $\coinv(w)$ by reversing the natural ordering on $\Z$.

The purpose of this paper is to give bijective proofs of some novel
identities involving the coinversion statistic. These identities 
were recently discovered and proved (algebraically) by 
Ayyer, Mandelshtam, and Martin~\cite{amm-mac} as part of their study
of the combinatorics of the modified Macdonald polynomials $\tilde{H}_{\mu}$.
Macdonald polynomials are not needed in this paper, but the reader may
consult references such as~\cite{hag-mac-conj,HHL-mac,orig-mac,mac-book} 
for more information.

To proceed, we must recall some definitions and results 
from~\cite[Sec. 9]{amm-mac}. 
Fix integers $n\geq 3$, $L>0$, $a_2,\ldots,a_{n-1}\geq 0$, and define
$N=L+a_2+\cdots+a_{n-1}$. For 
$0\leq k\leq L$, define $W_k=\mcR(1^{L-k}2^{a_2}\cdots (n-1)^{a_{n-1}} n^k)$ 
and $W=\bigcup_{k=0}^L W_k$. For any word $w\in W$, let $p_n(w)$ be the
position of the leftmost $n$ in $w$, counting from the left;
let $p_n(w)=\infty$ if no $n$ occurs in $w$. Let $p_1(w)$ be the
position of the rightmost $1$ in $w$, counting from the right end of $w$
and ignoring occurrences of $n$; let $p_1(w)=\infty$ if no $1$ occurs in $w$.
For example, when $n=4$ and $w=3142241324243$, we have $p_4(w)=3$ and 
$p_1(w)=5$. Note that the possible finite values of $p_1(w)$ are
$1,2,\ldots,N-L+1$ since $w$ has $N-k$ symbols (ignoring all copies of $n$)
and $L-k+1$ copies of $1$ must appear to the left of the rightmost $1$ in $w$.
Similarly, the possible finite values of $p_n(w)$ are $1,2,\ldots,N-k+1$.

For $0\leq k\leq L$, define $W_k^>=\{w\in W_k: p_n(w)>p_1(w)\}$
and $W_k^{\leq}=\{w\in W_k: p_n(w)\leq p_1(w)\}$. Since $L>0$,
we have the boundary cases $W_0^>=W_0$, $W_0^{\leq}=\emptyset$,
$W_L^{\leq}=W_L$, and $W_L^>=\emptyset$. 
Define $W^>=\bigcup_{k=0}^L W_k^>$ and $W^{\leq}=\bigcup_{k=0}^L W_k^{\leq}$.
Our main goal is to construct explicit bijective proofs of the following
identities, which are formulas (9.1) through (9.4) of~\cite{amm-mac}.

\begin{theorem}\label{thm:main1}
(a) For $0\leq k\leq L$,
\begin{equation}\label{eq:amm9.1}
 \sum_{w\in W_k^>} q^{\coinv(w)} = \dqbin{N}{L,a_2,\ldots,a_{n-1}}{q}
  \cdot q^k\dqbin{L-1}{k}{q}.
\end{equation}
(b) For $0\leq k\leq L$,
\begin{equation}\label{eq:amm9.2}
 \sum_{w\in W_k^{\leq}} q^{\coinv(w)} = \dqbin{N}{L,a_2,\ldots,a_{n-1}}{q}
  \cdot \dqbin{L-1}{k-1}{q}.
\end{equation}
(c) For $0\leq k\leq L$ and $1\leq i\leq N-L+1$,
\begin{equation}\label{eq:amm9.3}
 \sum_{\substack{w\in W_k^>:\\ p_1(w)=i}} q^{\coinv(w)} =
q^{k+(i-1)L}\dqbin{N-L}{a_2,\ldots,a_{n-1}}{q}\cdot
\dqbin{N-i}{L-k-1,N-L-i+1,k}{q}.
\end{equation}
(d) For $0\leq k\leq L$ and $1\leq j\leq N-k+1$,
\begin{equation}\label{eq:amm9.4}
 \sum_{\substack{w\in W_k^{\leq}:\\ p_n(w)=j}} q^{\coinv(w)} =
 q^{(j-1)L}\dqbin{N-L}{a_2,\ldots,a_{n-1}}{q}\cdot
\dqbin{N-j}{L-k,N-L-j+1,k-1}{q}.
\end{equation}
\end{theorem}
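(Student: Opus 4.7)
The plan is to reduce all four identities to a single coinv-preserving decomposition and then apply the $q$-Pascal identity. For $w \in W_k$, let $w' \in \mcR(1^{L-k} 2^{a_2} \cdots (n-1)^{a_{n-1}})$ be the word obtained from $w$ by deleting all copies of $n$, and let $M = \{m_1 < \cdots < m_k\} \subseteq [N]$ record the positions of $n$ in $w$. The map $w \mapsto (w', M)$ is a coinv-tracking bijection satisfying $\coinv(w) = \coinv(w') + \sum_{j=1}^{k} m_j - \tbinom{k+1}{2}$, since each inserted $n$ (being the maximum symbol) contributes one coinv pair for every non-$n$ letter to its left. Writing $r'$ for the position of the rightmost $1$ in $w'$, direct calculation gives $p_1(w) = N - k - r' + 1$ and $p_n(w) = m_1$, so $w \in W_k^{>}$ translates to $m_1 > N - k - r' + 1$ and $w \in W_k^{\leq}$ to $m_1 \le N - k - r' + 1$.

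Fixing $w'$, the subsets $M$ obeying $m_1 > N - k - r' + 1$ are the $k$-subsets of $\{N - k - r' + 2, \ldots, N\}$; shifting indices down by $N - k - r' + 1$ turns these into arbitrary $k$-subsets $M' \subseteq [k + r' - 1]$, and the $q$-weight sums to $q^{k(N - k - r' + 1)} \dqbin{k + r' - 1}{k}{q}$. Thus (a) reduces to the $q$-identity
\[
\sum_{w'} q^{\coinv(w') + k(N - k - r' + 1)} \dqbin{k + r' - 1}{k}{q} = \dqbin{N}{L, a_2, \ldots, a_{n-1}}{q} \cdot q^k \dqbin{L - 1}{k}{q},
\]
and (b) reduces to the analogous identity with $\dqbin{L-1}{k-1}{q}$ in place of $q^k\dqbin{L-1}{k}{q}$ and the complementary outer condition. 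Summing (a) and (b) recovers the well-known $\dqbin{N-k}{L-k, a_2, \ldots}{q}\dqbin{N}{k}{q} = \dqbin{N}{L, a_2, \ldots}{q}\dqbin{L}{k}{q}$ via $q$-Pascal $\dqbin{L}{k}{q} = q^k\dqbin{L-1}{k}{q} + \dqbin{L-1}{k-1}{q}$, confirming the two halves are complementary. To prove these bijectively, I match each $(w', M')$ with a pair $(u, T)$, where $u \in \mcR(1^L 2^{a_2} \cdots (n-1)^{a_{n-1}})$ is formed from $w'$ by inserting $k$ additional $1$'s strictly to the right of its rightmost $1$ (for (a); at or to the left for (b)), and $T \subseteq [L-1]$ (resp.\ of size $k-1$) records the indices of the inserted $1$'s among the $L$ ones of $u$.

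Parts (c) and (d) follow by refining the above. Fixing $p_1(w) = i$ (equivalently $r' = N - k - i + 1$) forces the last $i - 1$ letters of $w'$ to be middle letters, so we split $w'$ into its length-$r'$ prefix and its length-$(i-1)$ middle-only suffix; the full middle arrangement of $w$ accounts for the factor $\dqbin{N - L}{a_2, \ldots, a_{n-1}}{q}$, while the sub-structure encoding the relative positions of the $L-k-1$ non-rightmost $1$'s, the $N - L - i + 1$ prefix middles, and the $k$ $n$'s in $w$ contributes $\dqbin{N-i}{L-k-1, N-L-i+1, k}{q}$. The prefactor $q^{k+(i-1)L}$ records the coinv contributed jointly by the $L$ special symbols of $w$ (its $1$'s and $n$'s) and the $i - 1$ suffix middles, plus the residual $q^k$ from the insertion bijection of paragraph 2. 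Part (d) follows by the symmetric refinement after fixing $m_1 = j$: the first $j-1$ positions of $w$ are forced to be non-$n$, and splitting at this boundary yields the analogous factorization, with $q^{(j-1)L}$ now recording the interaction between the $L$ special symbols and the $j - 1$ forced-non-$n$ prefix letters.

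The main obstacle is constructing the explicit insertion bijection of paragraph 2 and verifying its coinv bookkeeping. Although the reduced identity follows algebraically from $q$-Pascal, the paper demands an explicit bijection, and the condition $p_n > p_1$ is an asymmetric global inequality coupling both ends of $w$. Reconciling the local data $M'$ (positions within the tail of $w'$, whose range depends on $r'$) with the global data $T$ (a subset of $[L-1]$, whose range does not) requires a careful position-by-position insertion recipe, tracking how each inserted $1$ interacts with the pre-existing non-$1$ letters of $w'$ and with the anchored rightmost $1$. Once this bijection is pinned down, the refinements for parts (c) and (d) should follow by further restricting to the sub-cases $p_1 = i$ or $p_n = j$ and carefully tracking the contribution of the forced-middle suffix or prefix.
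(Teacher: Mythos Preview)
Your $(w',M)$ decomposition is sound and genuinely different from the paper's route, but you have the logical dependencies inverted. From your own setup, parts~(c) and~(d) are \emph{immediate} in the three-letter case, with no obstacle at all: fixing $p_1(w)=i$ forces $r'=N-k-i+1$, so $w'$ factors as a free prefix in $\mcR(1^{L-k-1}2^{N-L-i+1})$, the anchor $1$, and the forced suffix $2^{i-1}$, contributing $q^{(L-k)(i-1)}\tqbin{N-i-k}{L-k-1}{q}$; meanwhile the sum over $k$-subsets $M\subseteq\{i+1,\dots,N\}$ gives $q^{ki}\tqbin{N-i}{k}{q}$. The product is exactly $q^{k+(i-1)L}\tqbin{N-i}{L-k-1,\,N-L-i+1,\,k}{q}$, which is~\eqref{eq:amm9.3'}; part~(d) is the symmetric computation fixing $m_1=j$ and summing over $w'$ with $r'\le N-k-j+1$. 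So your third paragraph is not a refinement of the second, it is the easy part, and it is already a complete bijective proof of (c),(d) once you add the middle-letter relabeling for general $n$.

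The genuine gap is in (a) and (b). Your proposed insertion bijection matching $(w',M')$ with $(u,T)$ is only a wish: you note yourself that the domain $[k+r'-1]$ of $M'$ varies with $w'$ while $[L-1]$ does not, and you give no recipe to reconcile them. The description ``insert $k$ ones strictly to the right of the rightmost $1$'' cannot be the whole story, since those insertions land among only $i-1$ letters while $M'$ lives in a set of size $N-i$. The paper avoids this problem entirely: it first proves an even finer statement (Theorem~\ref{thm:main2}, fixing both $p_1=i$ and $p_3=j$) via an explicit placement map $P_{i,j}$, then obtains (c),(d) by summing over one parameter and (a),(b) by summing over the other, each summation realized bijectively by the elementary maps $F,G,H,K$ of Section~\ref{sec:prelim-bij}. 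Since your decomposition already delivers (c),(d), the cleanest fix is to drop the unbuilt insertion map and instead sum~\eqref{eq:amm9.3'} over $i$ (resp.~\eqref{eq:amm9.4'} over $j$) using those same auxiliary bijections; that would complete your argument along lines parallel to, but organized differently from, the paper's.
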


Bijective proofs of parts (a) and (b) of the theorem combine to give
bijections proving
\begin{equation}\label{eq:true-goal}
\sum_{w\in W_k^>} q^{\coinv(w)}=q^k\sum_{w\in W_{k+1}^{\leq}} q^{\coinv(w)}.
\end{equation}
for $0\leq k<L$. These are the crucial bijections the authors of~\cite{amm-mac}
needed to complete their analysis of the quinv statistic in their novel
combinatorial formula for modified Macdonald polynomials.

As was already noted in~\cite{amm-mac}, the general formulas 
in Theorem~\ref{thm:main1} follow easily from the special case where $n=3$.
(This reduction can be done bijectively, as we see later.) 
Given $0<L\leq N$ and $0\leq k\leq L$, let $X_k=\mcR(1^{L-k}2^{N-L}3^k)$,
$X_k^>=\{w\in X_k: p_3(w)>p_1(w)\}$, and 
$X_k^{\leq}=\{w\in X_k: p_3(w)\leq p_1(w)\}$.  For this three-letter case,
we are reduced to proving the following formulas:
\begin{equation}\label{eq:amm9.1'}
\sum_{w\in X_k^>} q^{\coinv(w)} 
= q^k\dqbin{N}{L,N-L}{q}\dqbin{L-1}{L-1-k,k}{q},
\end{equation}
\begin{equation}\label{eq:amm9.2'}
\sum_{w\in X_k^{\leq}} q^{\coinv(w)} 
= \dqbin{N}{L,N-L}{q}\dqbin{L-1}{L-k,k-1}{q},
\end{equation}
\begin{equation}\label{eq:amm9.3'}
\sum_{\substack{w\in X_k^>:\\p_1(w)=i}} q^{\coinv(w)} 
= q^{k+(i-1)L}\dqbin{N-i}{L-k-1,N-L-i+1,k}{q},
\end{equation}
\begin{equation}\label{eq:amm9.4'}
\sum_{\substack{w\in X_k^{\leq}:\\p_3(w)=j}} q^{\coinv(w)} 
= q^{(j-1)L}\dqbin{N-j}{L-k,N-L-j+1,k-1}{q}.
\end{equation}

Our starting point for proving these formulas is the following refinement.

\begin{theorem}\label{thm:main2}
Fix integers $L,N,i,j,k$ with $0<L\leq N$, $0\leq k\leq L$,
$1\leq i\leq N-L+1$, and $1\leq j\leq N-k+1$.
\\ (a) If $0<k<L$, then
\begin{equation}\label{eq:pg53a}
\sum_{\substack{w\in \mcR(1^{L-k}2^{N-L}3^k):\\
 p_1(w)=i\text{ and }p_3(w)=j}} q^{\coinv(w)}
=\dqbin{N-j}{k-1}{q}\dqbin{N-i-k}{L-k-1}{q}q^{(j-1)k+(L-k)(i-1)}. 
\end{equation}
For the words $w$ indexing this sum,
the leftmost $3$ in $w$ occurs to the left of the
rightmost $1$ in $w$ if and only if $i+j+k\leq N+1$.
\\ (b) If $k=0$, then
\begin{equation}\label{eq:pg53b}
\sum_{\substack{w\in \mcR(1^L2^{N-L}):\\
 p_1(w)=i\text{ (and $p_3(w)=\infty$)}}} q^{\coinv(w)}
 =\dqbin{N-i}{L-1}{q}q^{L(i-1)}. 
\end{equation}
\\ (c) If $k=L$, then
\begin{equation}\label{eq:pg53c}
\sum_{\substack{w\in \mcR(2^{N-L}3^L):\\
 p_3(w)=j\text{ (and $p_1(w)=\infty$)}}} q^{\coinv(w)}
=\dqbin{N-j}{L-1}{q}q^{(j-1)L}. 
\end{equation}
\end{theorem}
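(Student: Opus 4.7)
My plan is to prove part (a) directly by a coinv decomposition; parts (b) and (c) will then follow as boundary cases. Let $u$ denote the subword of $w$ on the letters $\{1,2\}$. I would split
\[
\coinv(w) = \coinv(u) + C_3(w),
\]
where $C_3(w)$ counts pairs $(a,b)$ with $a<b$, $w_b=3$, and $w_a\in\{1,2\}$. Parameterizing the placement of the $k$ threes by insertion multiplicities $\alpha_0,\alpha_1,\ldots,\alpha_{N-k}$ (with $\alpha_m$ the number of 3's inserted between $u_m$ and $u_{m+1}$) gives $C_3(w)=\sum_m m\,\alpha_m$, since a 3 in slot $m$ has exactly $m$ non-3 letters preceding it. Crucially, the constraint $p_1(w)=i$ depends only on $u$ and the constraint $p_3(w)=j$ depends only on the $\alpha_m$'s, so the two contributions factor.

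First I would compute the 3-contribution under $p_3(w)=j$. This forces $\alpha_0=\cdots=\alpha_{j-2}=0$ and $\alpha_{j-1}\geq 1$. Setting $\beta_0=\alpha_{j-1}-1$ and $\beta_i=\alpha_{j-1+i}$ for $i\geq 1$ produces nonnegative integers $\beta_0,\ldots,\beta_{N-k-j+1}$ summing to $k-1$, and a short calculation gives $C_3(w)=(j-1)k+\sum_i i\beta_i$. The standard identity
\[
\sum_{\beta_0+\cdots+\beta_{N-k-j+1}=k-1} q^{\sum_i i\beta_i}=\dqbin{N-j}{k-1}{q}
\]
(a restatement of \eqref{eq:qbin-coinv} for two-letter words) then gives the 3-factor $q^{(j-1)k}\dqbin{N-j}{k-1}{q}$. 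Next, under $p_1(w)=i$, the word $u$ must have the form $u'\cdot 1\cdot 2^{i-1}$ for some $u'\in\mcR(1^{L-k-1}2^{N-L-i+1})$, and every 1 in $u$ contributes $(i-1)$ coinv pairs with the trailing 2's, so $\coinv(u)=\coinv(u')+(L-k)(i-1)$. Summing $q^{\coinv(u')}$ via \eqref{eq:qbin-coinv} gives the 1-2 factor $q^{(L-k)(i-1)}\dqbin{N-k-i}{L-k-1}{q}$. Multiplying the two independent factors yields \eqref{eq:pg53a}.

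For the positional claim, the rightmost 1 in $w$ is the letter $u_{N-k-i+1}$, which sits between slots $\alpha_{N-k-i}$ and $\alpha_{N-k-i+1}$, while the leftmost 3 lies in slot $\alpha_{j-1}$. Hence the leftmost 3 precedes the rightmost 1 in $w$ iff $j-1\leq N-k-i$, i.e.\ $i+j+k\leq N+1$. Parts (b) and (c) follow immediately: when $k=0$ there are no 3's, $C_3\equiv 0$, and only the 1-2 factor survives, giving \eqref{eq:pg53b}; when $k=L$ there are no 1's, $u=2^{N-L}$, $\coinv(u)=0$, and only the 3-factor survives (with $L$ in place of $k$), giving \eqref{eq:pg53c}. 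The only delicate point is the bookkeeping after the shift $\alpha_{j-1}=\beta_0+1$; once that is tracked, the $q$-exponent collapses cleanly to $(j-1)k+\sum_i i\beta_i$, and no deeper ingredient is needed.
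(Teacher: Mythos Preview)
Your proof is correct and follows essentially the same decomposition as the paper: both separate the placement of the $3$'s (your multiplicities $\alpha_m$, the paper's word $y\in\mcR(0^{N-j-k+1}3^{k-1})$) from the $\{1,2\}$-subword to the left of the rightmost $1$ (your $u'$, the paper's word $z\in\mcR(1^{L-k-1}2^{N-L-i+1})$), exploiting that $p_1$ ignores $3$'s so the two constraints factor. The only difference is packaging: the paper names the resulting weight-preserving bijection $P_{i,j}$ explicitly because it is reused as a building block in the proof of Theorem~\ref{thm:main1}, whereas you compute the two generating-function factors directly.
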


This paper is organized as follows. Section~\ref{sec:prelim-bij}
gives bijective proofs of some well-known identities for $q$-binomial
coefficients and $q$-multinomial coefficients. 
Section~\ref{sec:bij-thm2} gives a bijective proof of Theorem~\ref{thm:main2}. 
Section~\ref{sec:bij-thm1} gives a bijective proof of Theorem~\ref{thm:main1}.
Section~\ref{sec:example} works out a detailed example illustrating all
the bijections, which leads to a simplified description of the bijective
proof of~\eqref{eq:true-goal}.

\section{Preliminary Bijections}
\label{sec:prelim-bij}

This section describes some preliminary bijections needed to prove the 
main results. We use the following notation and facts about weighted sets. 
A \emph{weighted set} is a set $S$ and a weight function 
$\wt:S\rightarrow\Z_{\geq 0}$. The \emph{generating function}
for this weighted set is $\GF(S)=\sum_{s\in S} q^{\wt(s)}$. 
For any integer $c$ and weighted set $S$, the symbol $q^c S$ refers to the 
set $S$ with shifted weight function $\wt'(s)=\wt(s)+c$.
Note that $\GF(q^c S)=q^c\GF(S)$.
If $A$ and $B$ are weighted sets, then the Cartesian product
$A\times B=\{(a,b):a\in A,b\in B\}$ is a weighted set
with $\wt((a,b))=\wt(a)+\wt(b)$, and $\GF(A\times B)=\GF(A)\GF(B)$. 
If $S_1,\ldots,S_n$ are pairwise disjoint weighted sets, 
then $S_1\cup\cdots\cup S_n$ is a weighted set (using the same weights),
and $\GF(S_1\cup\cdots\cup S_n)=\GF(S_1)+\cdots+\GF(S_n)$.
For weighted sets $S$ and $T$, we write $S\equiv T$ to mean 
there is a weight-preserving bijection between $S$ and $T$;
in this case, $\GF(S)=\GF(T)$. In this paper, we always use $\coinv(w)$ as the
weight of a word $w$ in $\mcR(1^{a_1}\cdots n^{a_n})$.
It follows from~\eqref{eq:qbin-coinv} that $\GF(\mcR(1^{a_1}\cdots n^{a_n}))
 =\tqbin{a_1+\cdots+a_n}{a_1,\ldots,a_n}{q}$.

\subsection{Summation Identity for $q$-Binomial Coefficients}
\label{subsec:sum-qbin}

We will need two bijective versions of the following well-known
identity for summing certain $q$-binomial coefficients.

\begin{prop}\label{prop:FG-bij}
Fix integers $A,B$ with $1\leq B\leq A+1$.
There are weight-preserving bijections 
\[ F:\mcR(0^{A+1-B}3^B)\rightarrow
  \bigcup_{s=0}^{A+1-B}q^{sB}\mcR(0^{A+1-B-s}3^{B-1}), \]
\[ G:\mcR(1^B2^{A+1-B})\rightarrow
  \bigcup_{s=0}^{A+1-B}q^{sB}\mcR(1^{B-1}2^{A+1-B-s}), \]
and therefore
\[ \sum_{s=0}^{A-B+1} \dqbin{A-s}{B-1}{q} q^{sB}=\dqbin{A+1}{B}{q}. \]
\end{prop}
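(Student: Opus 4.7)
The plan is to construct each bijection by stripping from $w$ a natural ``boundary block'' whose coinv contribution to $w$ equals $sB$ and whose removal leaves a word of the required smaller type.

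For $F$, given $w\in\mcR(0^{A+1-B}3^B)$, I would let $s$ denote the number of $0$'s appearing strictly before the leftmost $3$ in $w$, and define $F(w)$ to be the word $w'$ obtained from $w$ by deleting these $s$ zeros together with the leftmost $3$, regarded as an element of the $s$-th component $q^{sB}\mcR(0^{A+1-B-s}3^{B-1})$ of the codomain. Then $0\leq s\leq A+1-B$, $w'$ has the correct letter multiplicities, and the inverse simply prepends $s$ zeros followed by a single $3$ in front of any element $w'$ of the $s$-th component; hence $F$ is a bijection of sets.

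To verify that $F$ is weight-preserving, I would account for exactly the coinv pairs of $w$ not visible in $w'$. Each of the $s$ deleted zeros forms a coinv pair with each of the $B$ threes of $w$ (namely the deleted leftmost $3$ together with the $B-1$ threes retained in $w'$), yielding exactly $sB$ such pairs. No other coinv pair is lost: the deleted $3$ is never the left entry of a coinv pair because it is not strictly less than any letter, and no coinv pair arises among the deleted zeros themselves. Hence $\coinv(w)=\coinv(w')+sB$, matching the $q^{sB}$ twist on the $s$-th component.

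The map $G$ is the mirror construction. Given $w\in\mcR(1^B 2^{A+1-B})$, I would let $s$ be the number of $2$'s appearing strictly after the rightmost $1$ in $w$, and define $G(w)$ to be the word obtained by deleting the rightmost $1$ and those $s$ trailing twos, placed in the $s$-th component of the codomain. A parallel accounting yields $s$ coinv pairs inside the deleted suffix (the deleted $1$ paired with each of the $s$ deleted $2$'s) and $s(B-1)$ cross pairs (each of the $B-1$ retained $1$'s paired with each deleted $2$), for a total weight shift of $sB$. Taking generating functions and using $\GF(\mcR(0^{A+1-B}3^B))=\tqbin{A+1}{B}{q}$ and $\GF(\mcR(0^{A+1-B-s}3^{B-1}))=\tqbin{A-s}{B-1}{q}$ from~\eqref{eq:qbin-coinv} then yields the summation identity. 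The only step requiring care is the coinv bookkeeping, and specifically verifying that choosing the leftmost $3$ (respectively rightmost $1$) is exactly what prevents double-counting or missed pairs; everything else is routine.
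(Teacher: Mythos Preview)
Your proof is correct and essentially identical to the paper's: the paper also defines $F$ by writing $w=0^s3w'$ (with the displayed $3$ leftmost) and setting $F(w)=w'$, and defines $G$ by writing $w=w'12^s$ (with the displayed $1$ rightmost) and setting $G(w)=w'$, with the same coinversion bookkeeping yielding the $sB$ weight shift.
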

\begin{proof}
Given a word $w\in\mcR(0^{A+1-B}3^B)$, write $w=0^s3w'$ where the displayed 
$3$ is the leftmost $3$ in $w$. Define $F(w)=w'$, which belongs to
$\mcR(0^{A+1-B-s}3^{B-1})$ for some $s$ between $0$ and $A+1-B$. 
Each of the $s$ copies of $0$ at the start of $w$ causes $B$ coinversions with
the $3$s later in $w$. These coinversions are not present in $w'$, but
all other coinversions in $w$ and $w'$ are the same. Thus,
$\coinv(w)=sB+\coinv(w')$, so that $F$ preserves weights.
$F$ is a bijection with inverse $F^{-1}(w')=0^s3w'$. 
When computing the inverse, we can deduce $s$ from $w'$ by
counting the $0$s in $w'$ (since $A$ and $B$ are fixed and known).
In detail, writing $n_0(w')$ for the number of $0$s in $w'$,
we have $s=A+1-B-n_0(w')$.

The bijection $G$ is defined and analyzed similarly: 
given $w\in\mcR(1^B2^{A+1-B})$, write $w=w'12^s$ where the
displayed $1$ is the rightmost $1$ in $w$, and let $G(w)=w'$. 
Each $2$ at the end of $w$ causes coinversions with all $B$ copies of $1$
appearing earlier, so passing from $w$ to $w'$ reduces $\coinv(w)$ by $sB$.
So $G$ is a weight-preserving bijection.
\end{proof}

\subsection{Factorization Identities for $q$-Multinomial Coefficients}
\label{subsec:factor-qmul}

\begin{prop}\label{prop:H-bij}
Fix integers $A,B,C\geq 0$. There is a weight-preserving bijection
\[ H:\mcR(1^A 2^B 3^C)\rightarrow\mcR(0^{A+B} 3^C)\times\mcR(1^A 2^B), \]
and therefore 
\[ \dqbin{A+B+C}{A,B,C}{q}=\dqbin{A+B+C}{A+B,C}{q}\dqbin{A+B}{A,B}{q}. \]
\end{prop}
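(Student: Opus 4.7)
The plan is to define $H$ by the natural ``split'' map: given $w \in \mcR(1^A2^B3^C)$, let $w'$ be the word obtained from $w$ by replacing every $1$ and every $2$ by a $0$ (leaving the $3$s in place), and let $w''$ be the subword obtained from $w$ by deleting all the $3$s. Clearly $w' \in \mcR(0^{A+B}3^C)$ and $w'' \in \mcR(1^A2^B)$, so we set $H(w) = (w', w'')$.

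Next I would check that $H$ is a bijection by describing its inverse explicitly: given $(w', w'') \in \mcR(0^{A+B}3^C) \times \mcR(1^A2^B)$, reconstruct $w$ by placing a $3$ wherever $w'$ has a $3$, and filling the positions where $w'$ has a $0$ with the letters of $w''$ read from left to right. It is immediate that $H \circ H^{-1}$ and $H^{-1} \circ H$ are the identity.

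The key point is weight preservation. I would split the pairs $(i,j)$ with $i<j$ counted by $\coinv(w)$ into three cases according to whether $w_i, w_j \in \{1,2\}$ or equal $3$: pairs with both entries in $\{1,2\}$ contribute exactly $\coinv(w'')$, pairs with $w_i \in \{1,2\}$ and $w_j = 3$ contribute exactly $\coinv(w')$ (since $0<3$ in $w'$ and $1,2<3$ in $w$ give the same set of pairs), and pairs with $w_i = w_j = 3$ contribute nothing to either statistic. Summing gives $\coinv(w) = \coinv(w') + \coinv(w'')$, which is precisely weight preservation for the product weight on $\mcR(0^{A+B}3^C) \times \mcR(1^A2^B)$. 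The corresponding $q$-multinomial identity then follows by taking generating functions and using the formulas from~\eqref{eq:qbin-coinv}.

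I don't anticipate any real obstacle here; the construction is routine and the main thing to be careful about is the three-way case analysis for coinversions, which has to account for the fact that $3$--$3$ pairs contribute to neither side. This same splitting idea should generalize easily to factor an arbitrary $q$-multinomial coefficient into a product of $q$-binomials, which will likely be useful in later sections.
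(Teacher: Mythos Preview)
Your proposal is correct and matches the paper's proof essentially verbatim: the paper defines $H(v)=(y,z)$ where $y$ replaces each $1$ or $2$ in $v$ by $0$ and $z$ is $v$ with all $3$s erased, and declares the weight check routine. Your three-way case analysis for coinversions is exactly the routine verification the paper omits, so there is nothing to add.
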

\begin{proof}
Let $H$ map $v\in\mcR(1^A 2^B 3^C)$ to $(y,z)\in\mcR(0^{A+B} 3^C)\times
\mcR(1^A 2^B)$, where $y$ is obtained from $v$ by replacing each occurrence
of $1$ or $2$ by $0$, and $z$ is obtained from $v$ by erasing all $3$s.
For example, $H(231132)=(030030,2112)$.
It is routine to check that $\coinv(v)=\coinv(y)+\coinv(z)=\wt((y,z))$,
so $H$ is weight-preserving. We invert $H$ by using the $1$s and $2$s in $z$
(reading left to right) to replace the $A+B$ copies of $0$ in $y$.
For example, $H^{-1}(303000,1212)=313212$.
\end{proof}

The same relabeling idea gives bijective proofs of related identities for
$q$-multinomial coefficients. 
For example, assuming $a_2+\cdots+a_{n-1}=N-L$, we get
\begin{equation}\label{eq:relabel2}
 \mcR(1^{L-k}2^{a_2}\cdots (n-1)^{a_{n-1}}n^k)\equiv
 \mcR(1^{L-k}2^{N-L}3^k)\times\mcR(2^{a_2}3^{a_3}\cdots (n-1)^{a_{n-1}}) 
\end{equation}
by mapping $v$ to $(y,z)$, where $y$ is $v$ with the middle letters
$2,\ldots,n-1$ all relabeled as $2$ and the biggest letter $n$ relabeled as $3$,
and $z$ is $v$ with all copies of $1$ and $n$ erased. We also get
\begin{equation}\label{eq:relabel3}
 \mcR(0^L3^{N-L})\times\mcR(2^{a_2}3^{a_3}\cdots (n-1)^{a_{n-1}})
 \equiv \mcR(1^L2^{a_2}3^{a_3}\cdots (n-1)^{a_{n-1}}) 
\end{equation}
by mapping $(y,z)$ to $v$, where $v$ is $y$ with each $0$ replaced by $1$
and the subword $3^{N-L}$ replaced by the word $z$. It is routine to check
that these maps are weight-preserving bijections.

\subsection{Symmetry Identities for $q$-Multinomial Coefficients}
\label{subsec:symm-qtri}

\begin{prop}\label{prop:K-bij}
For all integers $A,B,C\geq 0$, there is a weight-preserving bijection
\[ K:\mcR(1^A 2^B 3^C)\rightarrow \mcR(1^A 2^C 3^B). \]
\end{prop}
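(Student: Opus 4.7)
The plan is to reduce the three-letter identity to the classical $q$-binomial symmetry $\mcR(2^B 3^C)\equiv\mcR(2^C 3^B)$ by isolating the contribution of the letter $1$. The key observation is that for any word $w\in\mcR(1^A 2^B 3^C)$, every coinversion pair $(i,j)$ either has $w_i=1$ with $w_j\in\{2,3\}$, or has $w_i=2$ with $w_j=3$. Hence
\[ \coinv(w)=c_1(w)+c_{23}(w), \]
where $c_1(w)$ counts pairs of the first type (depending only on which positions of $w$ hold a $1$) and $c_{23}(w)$ is the coinv of the subword formed by the $2$s and $3$s read left to right.

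Guided by this, I would define $K$ as follows. Given $w\in\mcR(1^A 2^B 3^C)$, let $T=\{t_1<t_2<\cdots<t_{B+C}\}$ be the positions of $w$ holding a $2$ or $3$, and let $u=w_{t_1}w_{t_2}\cdots w_{t_{B+C}}\in\mcR(2^B 3^C)$ be the corresponding subword. Form its \emph{reverse-complement} $u'\in\mcR(2^C 3^B)$ by setting $u'_k=5-u_{B+C+1-k}$, which reverses $u$ and then interchanges each $2$ with each $3$. Finally, let $K(w)$ be obtained from $w$ by leaving every $1$ in its original position and replacing the entries at positions $t_1,\ldots,t_{B+C}$ by $u'_1,\ldots,u'_{B+C}$. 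Then $K(w)\in\mcR(1^A 2^C 3^B)$, and because the positions occupied by $1$s in $K(w)$ agree with those in $w$, we immediately get $c_1(K(w))=c_1(w)$.

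The one verification needed is that reverse-complement is a weight-preserving bijection $\mcR(2^B 3^C)\to\mcR(2^C 3^B)$: a coinv pair $(i,j)$ of $u$ with $u_i=2$ and $u_j=3$ corresponds to the pair $(B+C+1-j,\,B+C+1-i)$ in $u'$, whose entries are $5-3=2$ and $5-2=3$, again a coinv pair. This yields $\coinv(u')=\coinv(u)=c_{23}(w)$, and combining with the previous paragraph gives $\coinv(K(w))=c_1(w)+c_{23}(w)=\coinv(w)$. The inverse of $K$ is the same construction applied to the target set (with $B$ and $C$ exchanged), since reverse-complement is its own inverse on two-letter words. There is no real obstacle here beyond recording the coinv decomposition above; the argument can equivalently be packaged by first applying a relabeling bijection in the spirit of Proposition~\ref{prop:H-bij} to split off the $1$s, after which only the classical $q$-binomial symmetry on $\mcR(2^B 3^C)$ remains.
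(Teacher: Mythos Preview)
Your proof is correct and is essentially the same construction as the paper's: both keep the $1$s fixed and apply reverse-complement to the subword of $2$s and $3$s (the paper phrases it as ``swap $2\leftrightarrow 3$, then reverse,'' but these two operations commute, so the bijections coincide). Your explicit coinversion decomposition $\coinv(w)=c_1(w)+c_{23}(w)$ spells out the ``routine check'' the paper leaves to the reader.
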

\begin{proof}
Let $K$ act on $v\in\mcR(1^A 2^B 3^C)$ by replacing each $2$ by $3$
and each $3$ by $2$ in $v$, then reversing the subword of $2$s and $3$s 
in $v$. For example, $K(2311323331)=2211232231$. We invert $K$ by
performing the same actions on the output word. It is routine to check
that $K$ preserves coinversions.
\end{proof}

More generally, if $b_1,\ldots,b_n$ is any permutation of
$a_1,\ldots,a_n$, we have the algebraically obvious symmetry property
\begin{equation}\label{eq:multinom-symm}
 \dqbin{a_1+\cdots+a_n}{a_1,\ldots,a_n}{q}
  =\dqbin{a_1+\cdots+a_n}{b_1,\ldots,b_n}{q}.
\end{equation}
The idea in the preceding proof generalizes at once to
give a bijection interchanging the frequencies of any two adjacent letters.
Composing several bijections of this form, we can transform
the initial frequencies ($a_i$ copies of $i$ for all $i$)
to the final frequencies ($b_i$ copies of $i$ for all $i$).
This gives a bijective proof of~\eqref{eq:multinom-symm}
by showing $\mcR(1^{a_1}2^{a_2}\cdots n^{a_n})\equiv
            \mcR(1^{b_1}2^{b_2}\cdots n^{b_n})$.

\section{Bijective Proof of Theorem~\ref{thm:main2}}
\label{sec:bij-thm2}

To prove Theorem~\ref{thm:main2}(a),
let $L,N,i,j,k$ be integers with $0<L\leq N$, $0<k<L$,
$1\leq i\leq N-L+1$, and $1\leq j\leq N-k+1$.
It suffices to define a weight-preserving bijection $P=P_{i,j}$ 
mapping the domain
\begin{equation}\label{eq:main2-dom}
\mcR(0^{N-j-k+1}3^{k-1})\times\mcR(1^{L-k-1}2^{N-L-i+1})
q^{(j-1)k+(L-k)(i-1)} 
\end{equation}
one-to-one onto the codomain
\[ \{w\in\mcR(1^{L-k}2^{N-L}3^k): p_1(w)=i\mbox{ and }p_3(w)=j\}. \]
Given an input $(y,z)$ in the domain of $P$, we build $w=P(y,z)$ as follows.
Start with $N$ empty slots for the $N$ symbols in $w$.
Put a $3$ in slot $j$; we refer to this $3$ as L3 (the leftmost $3$ in $w$).  
There are $N-j$ slots to the right of position $j$, 
and the remaining $k-1$ copies of $3$ must go in these slots to ensure
that $p_3(w)=j$. Place the word $y$ in these $N-j$ slots, regarding a $0$
in $y$ as a slot in $w$ that still remains empty for now. Next, visit the empty
slots in $w$ from right to left, placing $i-1$ copies of $2$ followed by
a $1$, which ensures that $p_1(w)=i$. We refer to this copy of $1$ as R1
(the rightmost $1$ in $w$).  Finally, fill the remaining empty
slots in $w$ (to the left of the $1$ just placed) with the remaining
$L-k-1$ copies of $1$ and the remaining $N-L-i+1$ copies of $2$. Do this
by reading $z$ (left to right) and filling the empty slots (left to right)
using the symbols in $z$. It is routine to check that this procedure is
invertible, so $P$ is a bijection. 

Suppose R1 is placed to the left of L3 in $w$. This forces
R1 to be the leftmost symbol in the collection $\mcC$ consisting 
of the $k-1$ copies of $3$ to the right of L3, the $i-1$ copies of
$2$ to the right of R1, and R1 itself. 
Since there are $N-j$ available slots to the right of L3, 
we must have $(k-1)+(i-1)+1>N-j$, so $i+j+k>N+1$.
Conversely, if $i+j+k>N+1$, then the $N-j$ slots to the right of L3
cannot accommodate all symbols in $\mcC$, which forces R1 to be
placed to the left of L3 in $w$.

To see that $P$ preserves weights, we show that
$\coinv(w)=\coinv(y)+\coinv(z)+(j-1)k+(L-k)(i-1)$.
Note that every $0$ in $y$ is placed in $w$ to the right of L3
and eventually gets relabeled as a $1$ or $2$. Thus, $\coinv(y)$
counts all coinversions in $w$ involving a $1$ or $2$ to the right of L3
followed by a $3$ to the right of L3. Similarly, $\coinv(z)$
counts all coinversions in $w$ involving a $1$ to the left of R1
followed by a $2$ to the left of R1. We finish counting the coinversions
of $w$ as follows. First, each of the $j-1$ symbols to the left of L3 
(which must be $1$ or $2$)
causes a coinversion with each of the $k$ copies of $3$ in $w$, giving
$(j-1)k$ coinversions. Second, each of the $L-k$ copies of $1$ in $w$
causes a coinversion with each of the $i-1$ copies of $2$ to the right 
of R1, giving $(L-k)(i-1)$ coinversions. This explains the weight-shifting
factor in the domain~\eqref{eq:main2-dom}.

For example, let $N=13$, $L=6$, $k=4$, $i=5$, $j=3$,
$y=0030003030$, and $z=2122$. Then $P(y,z)=w=2132231223232$
where $p_1(w)=5$ and $p_3(w)=3$. Note that $\coinv(y)=13$, $\coinv(z)=2$,
and $\coinv(w)=31=13+2+2\cdot 4+2\cdot 4$.

We can prove parts~(b) and~(c) of Theorem~\ref{thm:main2} by using
degenerate versions of the bijection used to prove~(a). 
For the $k=0$ case, we define a bijection
\begin{equation}\label{eq:k=0case}
 \mcR(1^{L-1}2^{N-i-L+1})q^{L(i-1)}\rightarrow
  \{w\in\mcR(1^L2^{N-L}): p_1(w)=i\} 
\end{equation}
by mapping $z$ in the domain to $w=z12^{i-1}$ in the codomain.
The extra $q$-power appears since each of the $L$ copies of $1$ in $w$
causes a coinversion with each of the $i-1$ copies of $2$ added at the end.
For the $k=L$ case, we define a bijection
\begin{equation}\label{eq:k=Lcase}
 \mcR(2^{N-L-j+1}3^{L-1})q^{L(j-1)}\rightarrow
 \{w\in\mcR(2^{N-L}3^L): p_3(w)=j\} 
\end{equation}
by mapping $y$ in the domain to $w=2^{j-1}3y$ in the codomain.
The extra $q$-power appears since each of the $j-1$ copies of $2$ at
the start of $w$ causes a coinversion with all $L$ copies of $3$ in $w$.

\section{Bijective Proof of Theorem~\ref{thm:main1}}
\label{sec:bij-thm1}

\subsection{Proof of~\eqref{eq:amm9.3'}}
\label{subsec:amm9.3'}

Fix $i$ with $1\leq i\leq N-L+1$. To prove~\eqref{eq:amm9.3'}
when $0<k<L$, we must define a weight-preserving bijection 
\begin{equation}\label{eq:bij9.3'}
 \{w\in\mcR(1^{L-k}2^{N-L}3^k): p_3(w)>p_1(w)=i\}
\rightarrow \mcR(1^{L-k-1}2^{N-L-i+1}3^k)q^{k+(i-1)L}. 
\end{equation}
Take the disjoint union of the bijections $P_{i,j}^{-1}$ as 
$j$ ranges over possible values larger than $i$. This maps the
domain in~\eqref{eq:bij9.3'} to the disjoint union
\[ \bigcup_{j=i+1}^{N-k+1} \mcR(0^{N-j-k+1}3^{k-1})\times
 \mcR(1^{L-k-1}2^{N-L-i+1}) q^{(j-1-i)k+L(i-1)+k}. \]
Letting $s=j-i-1$, we can write this union as
\[ \left(\bigcup_{s=0}^{N-k-i} \mcR(0^{N-i-k-s}3^{k-1})q^{sk}\right)
 \times \mcR(1^{L-k-1}2^{N-L-i+1})q^{k+L(i-1)}. \]
The parenthesized piece is the codomain of the bijection $F$
in Proposition~\ref{prop:FG-bij}, taking $B=k$ and $A=N-i-1$. 
Applying $F^{-1}$ to this piece, we get a bijection to
\[ \mcR(0^{N-i-k}3^k)\times \mcR(1^{L-k-1}2^{N-L-i+1})q^{k+L(i-1)}. \]
Applying $H^{-1}$ from Proposition~\ref{prop:H-bij}, we reach 
\[ \mcR(1^{L-k-1}2^{N-L-i+1}3^k) q^{k+L(i-1)}, \]
which is the codomain in~\eqref{eq:bij9.3'}.
When $k=0$,~\eqref{eq:bij9.3'} reduces to
\[ \{w\in\mcR(1^L2^{N-L}): p_1(w)=i\}\rightarrow
 \mcR(1^{L-1}2^{N-L-i+1})q^{L(i-1)}, \] which is the inverse
of the bijection~\eqref{eq:k=0case}. 
When $k=L$, both sides of~\eqref{eq:amm9.3'} are $0$.

\subsection{Proof of~\eqref{eq:amm9.4'}}
\label{subsec:amm9.4'}

Fix $j$ with $1\leq j\leq N-k+1$. To prove~\eqref{eq:amm9.4'}
when $0<k<L$, we must define a weight-preserving bijection 
\begin{equation}\label{eq:bij9.4'}
 \{w\in\mcR(1^{L-k}2^{N-L}3^k): j=p_3(w)\leq p_1(w)\}
\rightarrow \mcR(1^{L-k}2^{N-L-j+1}3^{k-1})q^{L(j-1)}. 
\end{equation}
Take the disjoint union of the bijections $P_{i,j}^{-1}$ as 
$i$ ranges over its possible values that are at least $j$. This maps the
domain in~\eqref{eq:bij9.4'} to the disjoint union
\[ \bigcup_{i=j}^{N-L+1} \mcR(0^{N-j-k+1}3^{k-1})\times
 \mcR(1^{L-k-1}2^{N-L-i+1}) q^{(j-1)k+(L-k)(i-1)}. \]
Letting $s=i-j$, we can write this union as
\[ q^{L(j-1)}\mcR(0^{N-j-k+1}3^{k-1})\times\left(\bigcup_{s=0}^{N-L-j+1} 
 \mcR(1^{L-k-1}2^{N-L-s-j+1})q^{s(L-k)} \right). \]
The parenthesized piece is the codomain of the bijection $G$
in Proposition~\ref{prop:FG-bij}, taking $B=L-k$ and $A=N-k-j$. 
Applying $G^{-1}$ to this piece, we get a bijection to
\begin{equation}\label{eq:recover-j}
 q^{L(j-1)}\mcR(0^{N-j-k+1}3^{k-1})\times\mcR(1^{L-k}2^{N-L-j+1}). 
\end{equation}
Applying $H^{-1}$ from Proposition~\ref{prop:H-bij}, we reach 
\[ \mcR(1^{L-k}2^{N-L-j+1}3^{k-1}) q^{L(j-1)}, \]
which is the codomain in~\eqref{eq:bij9.4'}.
When $k=L$,~\eqref{eq:bij9.4'} reduces to
\[ \{w\in\mcR(2^{N-L}3^L): p_3(w)=j\}
\rightarrow \mcR(2^{N-L-j+1}3^{L-1})q^{L(j-1)},  \]
which is the inverse of the bijection~\eqref{eq:k=Lcase}. 
When $k=0$, both sides of~\eqref{eq:amm9.4'} are $0$.

\subsection{Proof of~\eqref{eq:amm9.1'}}
\label{subsec:amm9.1'}

To prove~\eqref{eq:amm9.1'} for fixed $k$, we build a weight-preserving
bijection
\begin{equation}\label{eq:bij9.1'}
 \{w\in\mcR(1^{L-k}2^{N-L}3^k): p_3(w)>p_1(w) \} \rightarrow
 q^k\mcR(0^L 3^{N-L})\times \mcR(1^{L-k-1}2^k).
\end{equation}
Take the disjoint union of the bijections~\eqref{eq:bij9.3'} over all possible
$i$. This maps the domain in~\eqref{eq:bij9.1'} to the disjoint union
\[ \bigcup_{i=1}^{N-L+1} \mcR(1^{L-k-1}2^{N-L-i+1}3^k)q^{L(i-1)+k}. \]
Use the bijection $K$ of Proposition~\ref{prop:K-bij} to interchange
the frequencies of $2$s and $3$s, which yields
\[ \bigcup_{i=1}^{N-L+1} \mcR(1^{L-k-1}2^k3^{N-L-i+1})q^{L(i-1)+k}. \]
Next use the bijection $H$ of Proposition~\ref{prop:H-bij} to reach
\[  \left(\bigcup_{i=1}^{N-L+1} \mcR(0^{L-1}3^{N-L-i+1})q^{L(i-1)}\right)
  \times q^k\mcR(1^{L-k-1}2^k). \]
To finish, use $G^{-1}$ from Proposition~\ref{prop:FG-bij} 
(taking $s=i-1$, $A=N-1$, $B=L$ and replacing each $1$ by $0$
and each $2$ by $3$) to reach
\[ q^k\mcR(0^L 3^{N-L})\times\mcR(1^{L-k-1}2^k). \]

\subsection{Proof of~\eqref{eq:amm9.2'}}
\label{subsec:amm9.2'}

To prove~\eqref{eq:amm9.2'} for fixed $k$, we build a weight-preserving
bijection
\begin{equation}\label{eq:bij9.2'}
 \{w\in\mcR(1^{L-k}2^{N-L}3^k): p_3(w)\leq p_1(w) \} \rightarrow
 \mcR(0^L 3^{N-L})\times \mcR(1^{L-k}2^{k-1}).
\end{equation}
Take the disjoint union of the bijections~\eqref{eq:bij9.4'} over all possible
$j$. (Since $j\leq p_1(w)$ here, the upper limit for $j$ is $N-L+1$.)
We thereby map the domain in~\eqref{eq:bij9.2'} to the disjoint union
\[ \bigcup_{j=1}^{N-L+1} \mcR(1^{L-k}2^{N-L-j+1}3^{k-1})q^{L(j-1)}. \]
Use the bijection $K$ of Proposition~\ref{prop:K-bij} to interchange
the frequencies of $2$s and $3$s, which yields
\[ \bigcup_{j=1}^{N-L+1} \mcR(1^{L-k}2^{k-1}3^{N-L-j+1})q^{L(j-1)}. \]
Next use the bijection $H$ of Proposition~\ref{prop:H-bij} to reach
\[  \left(\bigcup_{j=1}^{N-L+1} \mcR(0^{L-1}3^{N-L-j+1})q^{L(j-1)}\right)
  \times \mcR(1^{L-k}2^{k-1}). \]
To finish, use $G^{-1}$ from Proposition~\ref{prop:FG-bij} 
(taking $s=j-1$, $A=N-1$, $B=L$ and replacing each $1$ by $0$
and each $2$ by $3$) to reach
\[ \mcR(0^L 3^{N-L})\times\mcR(1^{L-k}2^{k-1}). \]

\subsection{Proof of~\eqref{eq:amm9.3} and~\eqref{eq:amm9.4}}
\label{subsec:prove-9.3,9.4}  

To prove~\eqref{eq:amm9.3}, we need a weight-preserving bijection
\begin{equation}\label{eq:bij9.3}
 \{w\in W_k^{>}: p_1(w)=i\}\rightarrow
 q^{k+(i-1)L}\mcR(1^{L-k-1} 2^{N-L-i+1} 3^k)
 \times\mcR(2^{a_2}\cdots (n-1)^{a_{n-1}}). 
\end{equation}
Restricting the bijection~\eqref{eq:relabel2} to
the domain of~\eqref{eq:bij9.3}, we get a bijection mapping that domain to
\[ \{w'\in X_k^>: p_1(w')=i\}\times\mcR(2^{a_2}\cdots (n-1)^{a_{n-1}}). \]
Now, apply~\eqref{eq:bij9.3'} to the first factor
to reach the codomain of~\eqref{eq:bij9.3}.
We prove~\eqref{eq:amm9.4} in the same way, using~\eqref{eq:bij9.4'}.

\subsection{Proof of~\eqref{eq:amm9.1} and~\eqref{eq:amm9.2}}
\label{subsec:prove-9.1,9.2}  

To prove~\eqref{eq:amm9.1}, we need a weight-preserving bijection
\begin{equation}\label{eq:bij9.1}
 W_k^{>}\rightarrow q^k\mcR(1^L2^{a_2}\cdots(n-1)^{a_{n-1}})
 \times\mcR(1^{L-1-k}2^k).
\end{equation}
Restricting the bijection~\eqref{eq:relabel2} to
the domain of~\eqref{eq:bij9.1}, we get a bijection mapping that domain to
\[ X_k^{>}\times\mcR(2^{a_2}\cdots (n-1)^{a_{n-1}}). \]
Apply~\eqref{eq:bij9.1'} to the first factor to reach
\[ q^k\mcR(0^L 3^{N-L})\times \mcR(1^{L-k-1}2^k)\times 
 \mcR(2^{a_2}\cdots (n-1)^{a_{n-1}}). \]
Finally, apply bijection~\eqref{eq:relabel3} to the first and third factors
in this Cartesian product to obtain
\[ q^k\mcR(1^L 2^{a_2}\cdots (n-1)^{a_{n-1}})\times \mcR(1^{L-k-1}2^k). \]
We prove~\eqref{eq:amm9.2} in the same way, using~\eqref{eq:bij9.2'}.

\section{A Detailed Example and the Proof of~\eqref{eq:true-goal}.}
\label{sec:example}

In this section, we start with a specific $w\in W_k^>$ and trace through all 
the bijections in the proof of~\eqref{eq:amm9.1}
to find the image of $w$ in the codomain of \eqref{eq:bij9.1}.
We continue by dropping the weight-shift factor $q^k$, replacing $k$ by $k+1$,
and tracing the proof of~\eqref{eq:amm9.2} backwards from this codomain to
get $w'\in W_{k+1}^{\leq}$ such that $\coinv(w)=\coinv(w')+k$. 
Some intermediate bijections cancel out in this two-step process, 
leading us to a simpler bijective proof of~\eqref{eq:true-goal}.

\subsection{Mapping $w\in W_k^>$ to the Intermediate Object}
\label{subsec:ex-part1}

Let $w=3112443214243\in\mcR(1^3 2^3 3^3 4^4)$, so
$n=4$, $k=4$, $L=7$, $a_2=3$, $a_3=3$, $N=13$, $\coinv(w)=39$,
$p_4(w)=5$, $p_1(w)=3$, and $w\in W_4^>$. We follow the proof
of~\eqref{eq:amm9.1} to send $w$ to an intermediate object 
in the codomain of the map~\eqref{eq:bij9.1}.

\begin{itemize}
\item \emph{Step~1.} Apply bijection~\eqref{eq:relabel2} to convert
$w$ to the pair $(y,z)$, where $y=2112332213232$ and $z=323223$.
Note $y\in X_4^{>}$ with $p_3(y)=5>3=p_1(y)$, $\coinv(y)=35$,
and $\coinv(z)=4$. The next six steps apply to $y$ alone.
\item \emph{Step~2.} Apply bijection $P_{i,j}^{-1}$ to $y$,
where $i=p_1(y)=3$ and $j=p_3(y)=5$. We get the pair
$(30003030\cdot q^4,211222\cdot q^{18})$, where the extra $q$-powers
indicate weight-shifting amounts for each component word.
\item \emph{Step~3.} Apply bijection $F^{-1}$ (Proposition~\ref{prop:FG-bij})
with $A=9$, $B=4$, $s=j-i-1=1$ to the first component, producing
$(0330003030,211222\cdot q^{18})$.
\item \emph{Step~4.} Apply bijection $H^{-1}$ (Proposition~\ref{prop:H-bij})
to change this pair to $2331123232\cdot q^{18}$.
\item \emph{Step~5.} Apply bijection $K$ (Proposition~\ref{prop:K-bij})
to reach $3231123223\cdot q^{18}$.
\item \emph{Step~6.} Apply bijection $H$ to get the pair
$(3030003003\cdot q^{14},211222\cdot q^{4})$.
\item \emph{Step~7.} Apply bijection $G^{-1}$ (with $i=3$, $s=i-1=2$, $A=12$,
$B=7$) to get $$(3030003003033,211222\cdot q^{4}).$$
\item \emph{Step~8.} Apply bijection~\eqref{eq:relabel3} to combine
the first component here with $z$. We thereby reach the intermediate object
\[ q^4(3121113112123,211222)\in q^k\mcR(1^7 2^3 3^3)\times\mcR(1^2 2^4). \]
This object has weight $4+29+6=39=\coinv(w)$.
\end{itemize}

\subsection{Mapping the Intermediate Object to $w'\in W_{k+1}^{\leq}$}
\label{subsec:ex-part2}

We continue operating on the intermediate object, dropping the $q^4$ 
shift and working backwards through the proof of~\eqref{eq:amm9.2},
taking $k=5$ now. We discover that the first five steps undo
the last five steps of the previous algorithm:

\begin{itemize}
\item\emph{Step~$8'$.} Apply the inverse of bijection~\eqref{eq:relabel3}
to produce the triple $(3030003003033,211222,z=323223)$. We save $z$ for later
and keep acting on the first two components.
\item\emph{Step~$7'$.} Apply $G$ to the first component to
get $(3030003003\cdot q^{14},211222)$.
\item\emph{Step~$6'$.} Apply $H^{-1}$ to this pair to get
$3231123223\cdot q^{14}$.
\item\emph{Step~$5'$.} Apply $K^{-1}$ to get
$2331123232\cdot q^{14}$.
\item\emph{Step~$4'$.} Apply $H$ to get
$(0330003030,211222)\cdot q^{14}$. Comparing to~\eqref{eq:recover-j},
we see $j=3$ now.
\item\emph{Step~$3'$.} Apply $G$ (with $j=3$, $s=3$, $A=5$, $B=2$, $i=s+j=6$) 
to get $(0330003030\cdot q^{14},21\cdot q^6)$.
\item\emph{Step~$2'$.} Apply $P_{6,3}$ to get
$2131332223232$.
\item\emph{Step~$1'$.} Apply the inverse of~\eqref{eq:relabel2} to the pair
$(2131332223232,z=323223)$ to get the final output
$w'=3141442324243$. Note $w'\in W_5^{\leq}$ has $p_4(w')=3\leq 6=p_1(w')$,
and $\coinv(w')=35=\coinv(w)-4$.
\end{itemize}

\subsection{Simplified Bijection Proving $W_k^>\equiv q^k W_{k+1}^{\leq}$}
\label{subsec:bij-true-goal}

To prove~\eqref{eq:true-goal} bijectively, we need a weight-preserving
bijection from $W_k^>$ to $q^k W_{k+1}^{\leq}$ for $0\leq k<L$. 
The cancellation of the actions in Steps 3 through 8 of the example holds 
in general. Thus we arrive at the following simplified description of how 
the required bijection acts on $w\in W_k^>$:

\begin{itemize}
\item[(a)] Use bijection~\eqref{eq:relabel2} to convert $w$ to $(y,z)$
where $y\in X_k^>$.
\item[(b)] Apply $P_{i,j}^{-1}$ to $y$, where $i=p_1(y)$ and $j=p_3(y)$,
so that $(y,z)$ becomes $(u,v,z)$.
\item[(c)] Apply $F^{-1}$ to $u$ by prepending $0^s3$, where $s=N-i-k-n_0(u)$,
to get $(u',v,z)$.
\item[(d)] Apply $G$ to $v$, which removes a suffix $12^{s'}$ and leaves
us with $(u',v',z)$.  Hereafter, we use new variables 
$k'=k+1$, $j'=N-k'+1-n_0(u')=i$, and $i'=s'+j'\geq j'$.
\item[(e)] Apply $P_{i',j'}$ to $(u',v')$ to change $(u',v',z)$ to $(y',z)$,
 where $y'\in X_{k'}^{\leq}=X_{k+1}^{\leq}$.
\item[(f)] Use the inverse of bijection~\eqref{eq:relabel2} to map  
$(y',z)$ to the final output word $w'\in W_{k'}^{\leq}=W_{k+1}^{\leq}$.
\end{itemize}

In the boundary cases where $k=0$ or $k'=L$, we modify these steps as follows.
When $k=0$, replace $P_{i,j}^{-1}$ in step~(b) by the inverse of 
bijection~\eqref{eq:k=0case}, which transforms $y$ to $v$;
and replace step~(c) by setting $u'=0^{N-i}$. 
For any $k$ in the range $0\leq k<L$, steps~(b) and (c) send
a word $y\in X_k^>$ with $p_1(y)=i$ to a pair $(u',v)$ in
\begin{equation}\label{eq:intmed1}
 \mcR(0^{N-i-k}3^k)\times\mcR(1^{L-k-1}2^{N-L-i+1})q^{k+L(i-1)}.
\end{equation}

When $k'=L$, we must have $v=2^{N-L-j'+1}$. Here, we modify step~(d) by
discarding $v$ from $(u',v,z)$ and replacing all $0$s in $u'$ by $2$s.
In step~(e), we replace $P_{i',j'}$ by the bijection~\eqref{eq:k=Lcase}, 
which transforms the modified $u'$ to a word $y'\in X_L^{\leq}$.
For any $k'$ in the range $0<k'\leq L$, doing the inverse of step~(e)
followed by the inverse of step~(d) sends a word $y'\in X_{k'}^{\leq}$
with $p_3(y')=j'$ to a pair $(u',v)$ in 
\begin{equation}\label{eq:intmed2}
 \mcR(0^{N-j'-k'+1}3^{k'-1})\times\mcR(1^{L-k'}2^{N-L-j'+1})q^{L(j'-1)}.
\end{equation}
Since $k'=k+1$ and $j'=i$, the intermediate collections~\eqref{eq:intmed1}
and~\eqref{eq:intmed2} match after shifting the latter by $q^k$, as needed.

 
\end{document}